\let\oldmarginpar\marginpar
\renewcommand\marginpar[1]{\-\oldmarginpar[\raggedleft\footnotesize #1]%
{\raggedright\footnotesize #1}}
\let\et=\etexdraw
\def\etexdraw{\drawbb\et}
\theoremstyle{plain}
\newtheorem{thm}{Theorem}[section]
\newtheorem{thm*}{Theorem}
\newtheorem{lem}[thm]{Lemma}
\newtheorem{prop}[thm]{Proposition}
\newtheorem{prop*}[thm*]{Proposition}
\newtheorem{cor}[thm]{Corollary}
\theoremstyle{definition}
\theoremstyle{remark}
\DeclareMathOperator{\End}{End}
\DeclareMathOperator{\Jac}{Jac}
\begin{document}

\title
[An upper bound on the number of F-jumping coefficients of a principal ideal]
{An upper bound on the number of F-jumping coefficients of a principal ideal}

\author{Mordechai Katzman}
\address{Department of Pure Mathematics,
University of Sheffield, Hicks Building, Sheffield S3 7RH, United Kingdom}
\email{M.Katzman@sheffield.ac.uk}

\author{Gennady Lyubeznik}
\address{Department of Mathematics, University of Minnesota, Minneapolis, MN, 55455, USA}
\email{gennady@math.umn.edu}
\author{Wenliang Zhang}
\address{Department of Mathematics, University of Michigan, Ann Arbor, MI, 48109, USA}
\email{wlzhang@umich.edu}


\date{}

\keywords{$F$-jumping coefficient, test ideal, Jacobian ideal}

\begin{abstract}
We prove a result relating the Jacobian ideal and the generalized test ideal associated to a principal ideal in $R=k[x_1,\dots,x_n]$ with 
$[k:k^p]<\infty$ or in $R=k[[x_1,\dots,x_n]]$ with an arbitrary field $k$ of characteristic $p>0$. 
As a consequence of this result, we establish an upper bound on the number of $F$-jumping coefficients of a principal ideal with an isolated singularity.  
\end{abstract}

\maketitle

\numberwithin{equation}{thm}
\section{Introduction}

In characteristic 0, one can define invariants,  called {\it jumping coefficients} in \cite{ELSV04}, attached to an ideal sheaf on a smooth variety via multiplier ideals. 
These jumping coefficients consist of an ascending chain of positive rational numbers and they encode interesting geometric and algebraic information 
(see \cite[Chapter 9]{Laz04} for details). In \cite{ELSV04} the following connection between Jacobian and multiplier ideals is discovered.

\begin{thm}[Proposition 3.8\footnote{In the statement of \cite[Proposition 3.8]{ELSV04}, $f$ is assumed to have an isolated singularity. 
However, this assumption is not needed in the proof of the statement. } in \cite{ELSV04}]
\label{Jacobian-multiplier-ideals}
Given $f\in \mathbb{C}[x_1,\dots,x_n]$, one has that
\[\Jac(f)\subseteq \mathcal{J}((f)^{1-\epsilon}),\text{ for all }\epsilon>0,\]
where $\Jac(f)=(f,\frac{\partial f}{\partial x_1},\dots,\frac{\partial f}{\partial x_n})$ and $\mathcal{J}((f)^{1-\epsilon})$
is the multiplier ideal of the pair $(\mathbb{C}[x_1,\dots,x_n],(f)^{1-\epsilon})$.
\end{thm}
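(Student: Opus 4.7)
The plan is to take a log resolution of $\mathrm{div}(f)$ and verify the inclusion by computing orders of vanishing along each irreducible component that appears. The main input is the chain rule, expressed through the Jacobian of the resolution map.

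Fix a log resolution $\mu\colon Y\to X=\mathbb{A}^{n}_{\mathbb{C}}$ of the pair $(X,\mathrm{div}(f))$ and write
\[
\mu^{*}\mathrm{div}(f)=\sum_{i} a_i E_i,\qquad K_{Y/X}=\sum_{i} k_i E_i,
\]
where $\sum E_i$ is a simple normal crossings divisor supporting both. By definition,
\[
\mathcal{J}\bigl((f)^{1-\epsilon}\bigr)=\mu_{*}\mathcal{O}_Y\!\Bigl(K_{Y/X}-\bigl\lfloor(1-\epsilon)\mu^{*}\mathrm{div}(f)\bigr\rfloor\Bigr).
\]
Since the multiplier ideals are monotone in $\epsilon$, it suffices to prove the inclusion for $0<\epsilon<1/\max_i a_i$. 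For such $\epsilon$, $\lfloor(1-\epsilon)a_i\rfloor=a_i-1$ for every $i$, so the task reduces to showing that for each generator $g$ of $\Jac(f)$,
\[
\operatorname{ord}_{E_i}(\mu^{*}g)\geq (a_i-1)-k_i\quad\text{for all }i.
\]

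For $g=f$ this is immediate: $\operatorname{ord}_{E_i}(\mu^{*}f)=a_i\geq a_i-1-k_i$, since $k_i\geq 0$. For the generators $\partial f/\partial x_j$, the idea is to use the chain rule. In local coordinates $y_1,\dots,y_n$ on $Y$, with $\mu$ given by $x_j=\phi_j(y)$ and Jacobian matrix $J=(\partial\phi_j/\partial y_k)$, one has
\[
\frac{\partial(\mu^{*}f)}{\partial y_k}=\sum_{j}\Bigl(\tfrac{\partial f}{\partial x_j}\circ\mu\Bigr)\frac{\partial\phi_j}{\partial y_k},
\]
and inverting this linear system via the adjugate of $J$ yields
\[
\frac{\partial f}{\partial x_j}\circ\mu=\frac{1}{\det J}\sum_{k}(\adj J)_{jk}\,\frac{\partial(\mu^{*}f)}{\partial y_k}.
\]
Here $\det J$ locally defines $K_{Y/X}$, so $\operatorname{ord}_{E_i}(\det J)=k_i$, while each $\partial(\mu^{*}f)/\partial y_k$ vanishes to order at least $a_i-1$ along $E_i$ (because $\mu^{*}f$ vanishes to order $a_i$). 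Combining these bounds gives exactly the required inequality.

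The only mildly subtle step is passing from the local adjugate identity to a valuative statement along each prime divisor $E_i$; this is handled by working at a generic point of $E_i$, where $\mu$ is a map of smooth varieties and the local formula applies verbatim, and then using that the order of vanishing along $E_i$ is determined at its generic point. Note that the argument nowhere uses that the singular locus of $f$ is isolated, which explains the parenthetical footnote: the hypothesis from \cite[Proposition 3.8]{ELSV04} is indeed superfluous.
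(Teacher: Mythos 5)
The paper cites this statement from \cite{ELSV04} without reproducing a proof, so the relevant comparison is with the original argument of Ein--Lazarsfeld--Smith--Varolin. Your proof is correct and is essentially theirs: pass to a log resolution, reduce to the valuative inequality $\operatorname{ord}_{E_i}\bigl(\mu^{*}(\partial f/\partial x_j)\bigr)\geq a_i-1-k_i$, and obtain it by clearing denominators in the chain rule via the adjugate of the Jacobian matrix (whose entries are regular, so contribute nonnegative order) together with $\operatorname{ord}_{E_i}(\det J)=k_i$ and $\operatorname{ord}_{E_i}\bigl(\partial(\mu^{*}f)/\partial y_k\bigr)\geq a_i-1$; the observation that $k_i\geq 0$ handles the generator $f$ itself. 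As you note, nothing in this uses the isolated-singularity hypothesis, which is exactly the point of the paper's footnote.
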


As a consequence, one has 

\begin{cor}
If $f\in R=\mathbb{C}[x_1,\dots,x_n]$ has an isolated singularity, then $f$ has at most $\dim_{\mathbb{C}}(\frac{R}{\Jac(f)})+1$ jumping coefficients in $[0,1]$.
\end{cor}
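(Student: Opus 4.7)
The plan is to combine Theorem~\ref{Jacobian-multiplier-ideals} with the strict-drop characterization of jumping coefficients, and then bound the number of drops using finite-dimensionality of $R/\Jac(f)$ (which follows from the isolated-singularity hypothesis).

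Concretely, enumerate the jumping coefficients in $(0,1]$ as $c_1<c_2<\cdots<c_m$. By definition of jumping coefficient and right-continuity of multiplier ideals, one obtains a strictly descending chain
\[
R = \mathcal{J}((f)^{0}) \supsetneq \mathcal{J}((f)^{c_1}) \supsetneq \mathcal{J}((f)^{c_2}) \supsetneq \cdots \supsetneq \mathcal{J}((f)^{c_m}).
\]
For any $c<1$, choose $\epsilon>0$ with $c \leq 1-\epsilon$; then monotonicity of multiplier ideals together with Theorem~\ref{Jacobian-multiplier-ideals} gives $\Jac(f) \subseteq \mathcal{J}((f)^{1-\epsilon}) \subseteq \mathcal{J}((f)^{c})$. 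So every term in the displayed chain, with the possible exception of the final one in the case $c_m = 1$, contains $\Jac(f)$.

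Quotienting by $\Jac(f)$ then yields a strictly descending chain of $\mathbb{C}$-subspaces of $R/\Jac(f)$ of length $m$: it starts at $R/\Jac(f)$ and descends through the images of $\mathcal{J}((f)^{c_i})$, stopping one step short of the bottom if $c_m = 1$. Since the isolated-singularity assumption forces $\dim_{\mathbb{C}}(R/\Jac(f))$ to be finite, any such chain has at most $\dim_{\mathbb{C}}(R/\Jac(f))+1$ terms, which gives the bound $m \leq \dim_{\mathbb{C}}(R/\Jac(f))+1$.

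The argument is essentially routine given Theorem~\ref{Jacobian-multiplier-ideals}. The one delicate point, and the source of the ``$+1$'' in the bound, is that the containment $\Jac(f) \subseteq \mathcal{J}((f)^{c})$ is only guaranteed for $c<1$; a potential jumping coefficient at $c=1$ must therefore be accommodated by excluding the bottom of the chain from the dimension count. If $c_m<1$, the same argument in fact yields the sharper inequality $m \leq \dim_{\mathbb{C}}(R/\Jac(f))$.
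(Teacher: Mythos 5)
Your proposal is correct and follows the same route the paper has in mind. The paper does not actually spell out a proof of this corollary; instead, before the analogous positive-characteristic corollary, it remarks only that $\dim_k(R/\Jac(f))$ being finite bounds the number of ideals in a chain between $R$ and $\Jac(f)$, and that the $F$-jumping coefficients strictly below $1$ produce exactly such a chain of test ideals containing $\Jac(f)$. Your write-up is the same argument, merely spelled out: the strictly descending chain of multiplier ideals indexed by the jumping coefficients, the containment $\Jac(f)\subseteq\mathcal{J}((f)^c)$ for $c<1$ from Theorem~\ref{Jacobian-multiplier-ideals} together with monotonicity of multiplier ideals, passing to $R/\Jac(f)$, and the finite-dimensionality bound on chain length. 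You also make explicit the one point the paper leaves implicit, namely that the term at $c=1$ need not contain $\Jac(f)$, which is exactly what produces the ``$+1$'' in the bound, and you correctly note the sharper bound $\dim_{\mathbb{C}}(R/\Jac(f))$ when $1$ is not a jumping coefficient.
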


The purpose of this paper is to extend these results to characteristic $p>0$.\par

In characteristic $p>0$, Hara and Yoshida introduced, in \cite{HY03}, an analogue of the multiplier ideals, the generalized test ideals. 
Generalized test ideals can defined in any noetherian ring of characteristic $p>0$, but the definition is less technical when the ring $R$ is an $F$-finite regular ring
or an excellent regular local ring. When $R$ is an $F$-finite regular ring (see \cite{BMS08}) or an excellent regular local ring (see \cite{KLZ09}),
for each ideal $J$ of $R$ and each positive integer $e$, there exists a unique smallest ideal $I_e(J)$ such that $J\subseteq (I_e(J))^{[p^e]}$. 
Then, for each nonnegative real number $t$, the generalized test ideal, $\tau(J^t)$, can be defined as 
\[\bigcup_{e}I_e(J^{\lceil tp^e\rceil}).\] 
In this context, we say that $c$ is an {\it $F$-jumping coefficient} of $J$ if $\tau(J^c)\subsetneqq \tau(J^{c'})$ for all $c'<c$. 
It is proved, in \cite{BMS09} when $R$ is an $F$-finite regular ring and in \cite{KLZ09} when $R$ is an excellent regular local ring,
that $F$-jumping coefficients of each principal ideal of $R$ consist of an ascending chain of positive rational numbers.

Let $R=k[x_1,\dots,x_n]$ with $[k:k^p]<\infty$ or $R=k[[x_1,\dots,x_n]]$ with $k$ an arbitrary field of characteristic $p>0$. 
Let $\frac{\partial f}{\partial x_i}$ denote the partial derivative of $f\in R$ with respect to $x_i$ and let 
$\Jac(f)=(f,\frac{\partial f}{\partial x_1},\dots,\frac{\partial f}{\partial x_n})$. 
Our main theorem of this paper is the following analogue of Theorem \ref{Jacobian-multiplier-ideals}.

\begin{thm}[Main Theorem]
Let $R$ be as above, then
\[\Jac(f)\subseteq \tau((f)^{1-\epsilon}),\]
for all $\epsilon>0$.
\end{thm}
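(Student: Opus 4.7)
\smallskip
\textbf{Proof plan.} The plan is to reduce the theorem to the pointwise algebraic claim that $\Jac(f)\subseteq I_e(f^{p^e-1})$ for every $e\geq 1$. Given this, the Main Theorem follows: for any $\epsilon>0$ choose $e$ large enough that $p^e>1/\epsilon$, so that $\lceil(1-\epsilon)p^e\rceil\leq p^e-1$; monotonicity of $I_e$ (i.e.\ $J\subseteq J'$ implies $I_e(J)\subseteq I_e(J')$, which is immediate from the characterization of $I_e$ as the smallest ideal whose $p^e$-th Frobenius power contains the input) then yields
\[
\Jac(f) \subseteq I_e\bigl(f^{p^e-1}\bigr) \subseteq I_e\bigl(f^{\lceil(1-\epsilon)p^e\rceil}\bigr) \subseteq \tau\bigl((f)^{1-\epsilon}\bigr).
\]

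By the minimality property defining $I_e(\cdot)$, the pointwise claim is equivalent to the following: for every ideal $K\subseteq R$ with $f^{p^e-1}\in K^{[p^e]}$, one has $\Jac(f)\subseteq K$. Fix such a $K$. The inclusion $f\in K$ is the easy step: multiplying by $f$ gives $f^{p^e}\in K^{[p^e]}$, and flatness (equivalently, injectivity) of the iterated Frobenius on our regular ring---valid in the two cases covered here, by $F$-finiteness in the polynomial case and by the framework of \cite{KLZ09} in the complete case---yields $f\in K$.

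For the partial derivative $\partial_i f$, the same Frobenius flatness reduces the goal to showing $(\partial_if)^{p^e}\in K^{[p^e]}$. Applying the divided-power operator $\partial_i^{[p^e]}$ to both sides of $f^{p^e}=f\cdot f^{p^e-1}$, using the characteristic-$p$ identity $\partial_i^{[p^e]}(g^{p^e})=(\partial_ig)^{p^e}$ (which follows from Lucas' theorem) together with the Leibniz rule for divided powers, one obtains
\[
(\partial_if)^{p^e}=f\cdot \partial_i^{[p^e]}\bigl(f^{p^e-1}\bigr)+\sum_{j=1}^{p^e}\partial_i^{[j]}(f)\cdot \partial_i^{[p^e-j]}\bigl(f^{p^e-1}\bigr).
\]
For each $j\geq 1$ the operator $\partial_i^{[p^e-j]}$ has order $<p^e$, hence is $R^{[p^e]}$-linear, hence stabilizes $K^{[p^e]}$; combined with $f^{p^e-1}\in K^{[p^e]}$ this shows that the entire sum over $j\geq 1$ lies in $K^{[p^e]}$. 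The whole argument thus reduces to the single containment $f\cdot \partial_i^{[p^e]}(f^{p^e-1})\in K^{[p^e]}$.

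This last step is the principal obstacle. The operator $\partial_i^{[p^e]}$ is not $R^{[p^e]}$-linear (Leibniz yields only $\partial_i^{[p^e]}(K^{[p^e]})\subseteq(K+\partial_i K)^{[p^e]}$, which is too weak), so a purely formal argument cannot conclude. I would attack it by expanding $\partial_i^{[p^e]}(f^{p^e-1})$ via iterated Leibniz as a sum over compositions of $p^e$ into $p^e-1$ non-negative parts, retaining only those whose multinomial coefficient survives modulo $p$ (as detected by Lucas' theorem on base-$p$ digits), and then using the already-established $f\in K$ to show that, after multiplication by $f$, each surviving contribution collapses into $K^{[p^e]}$. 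A plausible alternative, should the combinatorics grow unwieldy, is to proceed inductively on $e$: use the case-$e$ conclusion to absorb the corresponding $j=0$ term when working at level $e+1$, recursively descending on the exponent of $f$ in the Leibniz expansion of $\partial_i^{[p^{e+1}]}(f\cdot f^{p^{e+1}-2})$.
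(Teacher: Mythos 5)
Your reduction is correct and matches the paper's up to a point: reduce to $\Jac(f)\subseteq I_e(f^{p^e-1})$, then to showing that any $K$ with $f^{p^e-1}\in K^{[p^e]}$ contains both $f$ and each $\partial_i f$, dispose of $f$ trivially, invoke $D_{p^e,i}(f^{p^e})=(\partial_i f)^{p^e}$ (the paper's Lemma~\ref{p^e-derivative}, via Lucas and Fermat), expand by Leibniz, and observe that the operators $D_{t,i}$ with $t<p^e$ are $R^{p^e}$-linear and hence stabilize $K^{[p^e]}$. All of that is in the paper.

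But you stop precisely at the crux: showing $f\cdot D_{p^e,i}(f^{p^e-1})\in K^{[p^e]}$. The paper does not handle this term by any combinatorial expansion of $D_{p^e,i}(f^{p^e-1})$ or by induction on $e$; instead it proves a standalone algebraic identity (Theorem~2.2, equation~\eqref{key-formula}),
\[
\sum_{\ell=0}^{m}(\ell-1)\,D_{\ell,i}(f)\,D_{m-\ell,i}(f^{m-1})=0 ,
\]
valid over $\mathbb{Z}$ for every $m\geq 1$ (proved by a manipulation with $\ell\binom{m}{\ell}=m\binom{m-1}{\ell-1}$ and two applications of Leibniz). Setting $m=p^e$, the $\ell=0$ term is $-f\,D_{p^e,i}(f^{p^e-1})$ and the $\ell=1$ term vanishes, so the identity rewrites the problematic product as
\[
f\,D_{p^e,i}(f^{p^e-1})=\sum_{\ell=2}^{p^e}(\ell-1)\,D_{\ell,i}(f)\,D_{p^e-\ell,i}(f^{p^e-1}),
\]
and every term on the right carries a derivative of order $p^e-\ell<p^e$ acting on $f^{p^e-1}$, hence lies in $K^{[p^e]}$ by the $R^{p^e}$-linearity argument you already have. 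This is the missing idea. Your two proposed attacks are not equivalent to it and neither plausibly closes the gap: knowing $f\in K$ only controls $f^{p^e}$, not $f$ times an arbitrary ring element, so absorbing $f\cdot D_{p^e,i}(f^{p^e-1})$ that way has no purchase; and the inductive scheme on $e$ changes both the Frobenius power and the exponent of $f$ at once, so there is no evident way to feed the level-$e$ conclusion into the level-$(e+1)$ Leibniz expansion. As written, the proposal correctly isolates the obstacle but does not overcome it, so there is a genuine gap where the paper's key identity would go.
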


When $k$ is a perfect field of characteristic $p>0$, then the singular locus of $R/(f)$ is determined by $\Jac(f)$. 
In particular, when $f$ (or equivalently $R/(f)$) has an isolated singularity, $\dim_k(\frac{R}{\Jac(f)})$ is finite. 
Note that in this case there are at most $\dim_k(\frac{R}{\Jac(f)})$ different ideals  between $R$ and $\Jac(f)$. Therefore, as a consequence of our Main Theorem, we have 
\begin{cor}
Let $k$ be a perfect field of characteristic $p>0$, and let $R=k[x_1,\dots,x_n]$ or $k[[x_1,\dots,x_n]]$. 
Then, for each $f\in R$ with isolated singularity, there are at most $\dim_k(\frac{R}{\Jac(f)})+1$ $F$-jumping coefficients of $f$ in $[0,1]$.
\end{cor}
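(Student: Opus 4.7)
The plan is to combine the Main Theorem with an elementary length/dimension count for descending chains of ideals containing $\Jac(f)$.

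First I would unpack what the Main Theorem gives us. Since $\tau(f^c)$ is weakly decreasing in $c$, the inclusion $\Jac(f) \subseteq \tau(f^{1-\epsilon})$ valid for every $\epsilon > 0$ upgrades immediately to $\Jac(f) \subseteq \tau(f^c)$ for every $c \in [0,1)$: given such $c$, pick any $\epsilon$ with $0 < \epsilon < 1-c$ and use $\tau(f^c) \supseteq \tau(f^{1-\epsilon}) \supseteq \Jac(f)$. This is the only place the Main Theorem enters.

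Next I would translate ``$F$-jumping coefficients in $[0,1]$'' into a descending chain of ideals. By the definition recalled in the introduction, if $0 < c_1 < c_2 < \cdots < c_k \le 1$ are the $F$-jumping coefficients of $f$ lying in $[0,1]$, then
\[
R = \tau(f^{0}) \supsetneq \tau(f^{c_1}) \supsetneq \tau(f^{c_2}) \supsetneq \cdots \supsetneq \tau(f^{c_k})
\]
is a strictly descending chain of ideals of $R$. By the previous paragraph every term other than possibly the last (when $c_k = 1$) contains $\Jac(f)$; in particular the chain truncated at $\tau(f^{c_{k-1}})$ (or at $\tau(f^{c_k})$ when $c_k < 1$) consists of at least $k$ distinct ideals of $R$ all containing $\Jac(f)$.

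Finally I would invoke the length bound that the excerpt flags: since $k$ is perfect and $f$ has an isolated singularity, $R/\Jac(f)$ is a finite-dimensional $k$-vector space, and any ideal of $R$ containing $\Jac(f)$ is in particular a $k$-subspace of $R/\Jac(f)$. Hence a strictly descending chain of such ideals has length at most $\dim_k(R/\Jac(f))$, i.e.\ at most $\dim_k(R/\Jac(f)) + 1$ terms. Comparing with the count in the previous paragraph yields $k \le \dim_k(R/\Jac(f)) + 1$, with the extra $+1$ exactly accounting for the possibility that $c_k = 1$ is a jumping coefficient whose associated test ideal need not contain $\Jac(f)$. There is no real obstacle here beyond this boundary bookkeeping at $c=1$; the substantive work is entirely in the Main Theorem.
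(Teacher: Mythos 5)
Your argument is correct and is essentially the paper's own (which is packed into the short paragraph immediately preceding the corollary): the Main Theorem puts every $\tau(f^c)$ with $c<1$ between $R$ and $\Jac(f)$, and a strictly descending chain of such ideals has at most $\dim_k(R/\Jac(f))+1$ terms. You make explicit what the paper leaves implicit, namely that the extra $+1$ accommodates the possible jump at $c=1$, whose test ideal $\tau(f)$ need not contain $\Jac(f)$.
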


\section{A result on differential operators}
In this section we consider differential operators over $R=\mathbb{Z}[x_1,\dots,x_n]$ (or $R=\mathbb{Z}[[x_1,\dots,x_n]]$, respectively): 
let $D_{m,i}:R\to R$ be the $\mathbb{Z}[x_1,\dots,x_{i-1},x_{i+1},\dots,x_n]$-linear
(or $\mathbb{Z}[[x_1,\dots,x_{i-1},x_{i+1},\dots,x_n]]$-linear, respectively) map that sends 
$x^{\ell}_i$ to $\binom{\ell}{m}x_i^{\ell-m}$ and let $D_{0,i}$ be the identity map. We may write $D_{m,i}$ as
\[D_{m,i}=\frac{1}{m!}\frac{\partial^m}{\partial x^m_i}:R\to R.\]

Even though the following proposition is stated over a field $k$ in \cite{Lyu10}, the same proof works over $\mathbb{Z}$.

\begin{prop}[Proposition 2.1 in \cite{Lyu10}]
\label{higher-derivatives}
For each $f\in R$, we have 
\[D_{m,i}\cdot f=\sum^m_{\ell=0}D_{\ell,i}(f)\cdot D_{m-\ell,i}\ {\rm in\ }\End_{\mathbb{Z}}(R),\]
i.e., given $f,g\in R$, we have
\[D_{m,i}( fg)=\sum^m_{\ell=0}D_{\ell,i}(f)D_{m-\ell,i}(g)\ {\rm in\ } R.\]
\end{prop}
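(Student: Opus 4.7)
The plan is to prove the identity by reducing it, via the multilinearity of both sides, to the case when $f$ and $g$ are pure powers of the single variable $x_i$, and then to finish by a direct computation that invokes the Vandermonde convolution identity for binomial coefficients.

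First I would note that each $D_{m,i}$ is $\mathbb{Z}[x_1,\dots,x_{i-1},x_{i+1},\dots,x_n]$-linear by definition, and that both sides of the asserted identity
\[
D_{m,i}(fg)=\sum_{\ell=0}^{m} D_{\ell,i}(f)\,D_{m-\ell,i}(g)
\]
are $\mathbb{Z}$-bilinear in $(f,g)$. Writing $f$ and $g$ as $R'$-linear combinations of powers of $x_i$, where $R'=\mathbb{Z}[x_1,\dots,x_{i-1},x_{i+1},\dots,x_n]$ (or its power-series analogue), one can pull all $R'$-coefficients out of the operators $D_{m,i}$. Consequently, it suffices to verify the identity when $f=x_i^a$ and $g=x_i^b$ for arbitrary nonnegative integers $a,b$; in the power-series case the same reduction is valid because $D_{m,i}$ commutes with the $\mathfrak{m}$-adic convergence in the $x_i$-variable.

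For $f=x_i^a$ and $g=x_i^b$, the left-hand side is
\[
D_{m,i}(x_i^{a+b}) = \binom{a+b}{m}\,x_i^{a+b-m},
\]
while the right-hand side becomes
\[
\sum_{\ell=0}^{m} \binom{a}{\ell}\binom{b}{m-\ell}\,x_i^{a+b-m}.
\]
The two expressions agree by the Vandermonde identity $\sum_{\ell=0}^{m}\binom{a}{\ell}\binom{b}{m-\ell}=\binom{a+b}{m}$.

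There is no real obstacle here; the only point one must be careful about is that the reduction to monomials works integrally (no division by $m!$ enters), which is exactly why the operators are defined as $D_{m,i}$ sending $x_i^{\ell}\mapsto\binom{\ell}{m}x_i^{\ell-m}$ rather than as $\frac{1}{m!}\partial_i^m$. Finally, the operator identity $D_{m,i}\cdot f=\sum_{\ell=0}^{m} D_{\ell,i}(f)\cdot D_{m-\ell,i}$ in $\End_{\mathbb{Z}}(R)$ is just a restatement of the Leibniz formula, obtained by applying both sides to an arbitrary $g\in R$.
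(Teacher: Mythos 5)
Your proof is correct. The paper does not actually give a proof here — it states the proposition and defers to \cite{Lyu10}, remarking only that the proof there works over $\mathbb{Z}$ — but your argument (reduce by $R'$-bilinearity to $f=x_i^a$, $g=x_i^b$, then invoke the Vandermonde convolution $\sum_{\ell}\binom{a}{\ell}\binom{b}{m-\ell}=\binom{a+b}{m}$) is the standard direct verification and is essentially the proof in \cite{Lyu10}; the one place to state things slightly more carefully is the power-series case, where the reduction to monomials uses that both sides are continuous for the $x_i$-adic topology and agree on the dense subring of polynomials in $x_i$.
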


Our main result of this section is the following identity.

\begin{thm}
Given any $f\in R$ and a positive integer $m$, we have
\begin{equation}
\label{key-formula}
\sum_{\ell=0}^m(\ell-1)D_{\ell,i}(f)D_{m-\ell,i}(f^{m-1})=0.
\end{equation}
\end{thm}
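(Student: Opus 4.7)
My strategy is to derive two different expressions for $D_{m,i}(f^m)$, exploiting two different factorizations, and then subtract them; the desired identity will drop out.

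The first expression comes directly from Proposition~\ref{higher-derivatives} applied to $f^m = f\cdot f^{m-1}$:
\[
D_{m,i}(f^m)=\sum_{\ell=0}^m D_{\ell,i}(f)\,D_{m-\ell,i}(f^{m-1}).
\]
For the second, I would exploit that $D_{1,i}=\partial/\partial x_i$ is an honest derivation, so that in $R$ one has $\partial_i(f^m)=m\,f^{m-1}\partial_i f$. Applying $D_{m-1,i}$ to both sides and using the composition rule $D_{m-1,i}\circ D_{1,i}=m\,D_{m,i}$ (an instance of $D_{j,i}D_{k,i}=\binom{j+k}{j}D_{j+k,i}$, which is a one-line check on monomials) gives $m\,D_{m,i}(f^m)$ on the left. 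On the right, Proposition~\ref{higher-derivatives} expands $D_{m-1,i}\bigl(f^{m-1}\cdot\partial_i f\bigr)$ as $\sum_{j=0}^{m-1}D_{j,i}(\partial_i f)\,D_{m-1-j,i}(f^{m-1})$; substituting $D_{j,i}(\partial_i f)=D_{j,i}D_{1,i}(f)=(j+1)D_{j+1,i}(f)$ and reindexing $\ell=j+1$ yields
\[
m\,D_{m,i}(f^m)=m\sum_{\ell=1}^{m}\ell\,D_{\ell,i}(f)\,D_{m-\ell,i}(f^{m-1}).
\]

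The only delicate point is that I now want to cancel the factor $m$. This is legitimate because $R=\mathbb{Z}[x_1,\dots,x_n]$ (respectively $R=\mathbb{Z}[[x_1,\dots,x_n]]$) is torsion-free as a $\mathbb{Z}$-module, so after cancellation
\[
D_{m,i}(f^m)=\sum_{\ell=0}^{m}\ell\,D_{\ell,i}(f)\,D_{m-\ell,i}(f^{m-1})
\]
(the $\ell=0$ term vanishes, so starting the sum at $\ell=0$ versus $\ell=1$ makes no difference). Subtracting the first expression for $D_{m,i}(f^m)$ from this second one produces
\[
0=\sum_{\ell=0}^{m}(\ell-1)\,D_{\ell,i}(f)\,D_{m-\ell,i}(f^{m-1}),
\]
which is the claimed formula.

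The principal (and only real) obstacle is the cancellation of $m$: one must work over a $\mathbb{Z}$-torsion-free ring, which is why the identity is set up over $\mathbb{Z}$ rather than, say, $\mathbb{F}_p$. As a sanity alternative, one could instead argue that both $R$'s embed into their rationalizations, inside of which $D_{\ell,i}=\tfrac{1}{\ell!}\partial_i^\ell$; the identity then becomes $\sum_{\ell}(\ell-1)\binom{m}{\ell}\partial_i^\ell(f)\,\partial_i^{m-\ell}(f^{m-1})=0$, which is the same two-term cancellation recast in ordinary (non-divided) derivatives.
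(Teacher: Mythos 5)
Your proof is correct and takes essentially the same approach as the paper's: both compute $D_{m,i}(f^m)$ (equivalently $\partial^m(f^m)$) in two ways --- once via the Leibniz rule applied to $f\cdot f^{m-1}$, and once via $\partial(f^m)=m f^{m-1}\partial f$ followed by applying $\partial^{m-1}$ --- and read off the key identity from the difference of the two expansions. The differences are cosmetic: you stay in divided-power notation and cancel a single factor of $m$ via the composition rule $D_{m-1,i}D_{1,i}=mD_{m,i}$, while the paper multiplies the identity by $m!$ and manipulates ordinary partial derivatives, but both arguments rely on the $\mathbb{Z}$-torsion-freeness of $R$ in the same way (you flag it explicitly; the paper uses it implicitly when it scales by $m!$).
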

\begin{proof}[Proof]
Writing $D_{m,i}$ as $\frac{1}{m!}\frac{\partial^m}{\partial x^m_i}$ and multiplying (\ref{key-formula}) by $m!$, we can rewrite (\ref{key-formula}) as
\[\sum^m_{\ell=0}(\ell-1)\binom{m}{\ell}\frac{\partial^{\ell}f}{\partial x_1^{\ell}}\frac{\partial^{m-\ell}f^{m-1}}{\partial x^{m-\ell}_i}=0.\]
To ease our notation, we will write $D_{\ell,i}(f)$ as $\partial^{\ell}(f)$. Hence the above equation becomes
\begin{equation}
\label{simplified-key-equation}
\sum^m_{\ell=0}(\ell-1)\binom{m}{\ell}\partial^{\ell}(f)\partial^{m-\ell}(f^{m-1})=0.
\end{equation}

We first expand
\begin{equation}\label{eqn1}
\sum^m_{\ell=0}(\ell-1)\binom{m}{\ell}\partial^{\ell}(f)\partial^{m-\ell}(f^{m-1})
=
-f\partial^m(f^{m-1})-
\sum^m_{\ell=2}\binom{m}{\ell}\partial^{\ell}(f)\partial^{m-\ell}(f^{m-1})+
\sum^m_{\ell=2}\ell\binom{m}{\ell}\partial^{\ell}(f)\partial^{m-\ell}(f^{m-1}) .
\end{equation}
Use the fact that 
$\ell\binom{m}{\ell}=m\binom{m-1}{\ell-1}$ to rewrite (\ref{eqn1}) as
\begin{equation}\label{eqn2}
-f\partial^m(f^{m-1})-
\sum^m_{\ell=2}\binom{m}{\ell}\partial^{\ell}(f)\partial^{m-\ell}(f^{m-1})+
\sum^m_{\ell=2}m\binom{m-1}{\ell-1}\partial^{\ell}(f)\partial^{m-\ell}(f^{m-1}) .
\end{equation}
Note that Leibniz's rule implies that 
$\partial^m(f^m)= \partial^m(f f^{m-1}) = \sum_{l=0}^m \binom{m}{l} \partial^l(f) \partial^{m-l}(f^{m-1})$ and rewrite (\ref{eqn2}) as
\begin{equation}\label{eqn3}
-f\partial^m(f^{m-1})-
\left(\partial^m(f^m)-f\partial^m(f^{m-1})-m\partial(f)\partial^{m-1}(f^{m-1})\right)+
\sum^{m-1}_{\ell=1}m\binom{m-1}{\ell}\partial^{\ell}(\partial(f))\partial^{m-1-\ell}(f^{m-1}) .
\end{equation}
Now 
\begin{eqnarray*}
m\partial(f)\partial^{m-1}(f^{m-1})+
\sum^{m-1}_{\ell=1} m \binom{m-1}{\ell}\partial^{\ell}(\partial(f))\partial^{m-1-\ell}(f^{m-1}) & = & 
m \sum^{m-1}_{\ell=0} \binom{m-1}{\ell}\partial^{\ell}(\partial(f))\partial^{m-1-\ell}(f^{m-1})\\
& = & \partial^{m-1} \left( m (\partial f) f^{m-1} \right) \\
& = & \partial^{m-1} \partial f^m=\partial^{m} f^m
\end{eqnarray*}
and (\ref{eqn3}) simplifies to
\begin{equation*}
-f\partial^m(f^{m-1})- \partial^m(f^m) + f\partial^m(f^{m-1}) + \partial^{m} f^m=0 .
\end{equation*}

\end{proof}

\section{Proof of the Main Theorem}
Throughout this section $R$ is either $k[x_1,\dots,x_n]$ with $[k:k^p]<\infty$ or $k[[x_1,\dots,x_n]]$ with $k$ an arbitrary field of characteristic $p>0$. 
In either case, by reducing the operators $D_{m,i}$ mod $p$, we get differential operators over $k$, which will still be denoted  $D_{m,i}$. 
Note that the identity (\ref{key-formula}) also holds over $k$ and that each differential operator $D_{m,i}$ is $R^{p^e}$-linear when $m<p^e$.

We begin with an easy observation.

\begin{lem}
\label{p^e-derivative}
Given any $f\in R$, we have
\[D_{p^e,i}(f^{p^e})=(\frac{\partial f}{\partial x_i})^{p^e}.\]
\end{lem}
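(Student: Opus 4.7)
The plan is to prove the identity by expanding $f$ in powers of $x_i$ and computing both sides directly. Write $f = \sum_{m \geq 0} c_m x_i^m$ with each coefficient $c_m$ in the subring obtained by omitting $x_i$ (a finite sum in the polynomial case, a formal power series in the power series case). Since the operators $D_{p^e,i}$ and $\partial/\partial x_i$ are both linear over that subring, this reduces the claim to a one-variable computation.

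To compute the left-hand side, I would first raise to the $p^e$-th power. Because Frobenius is additive in characteristic $p$ and the $c_m$ do not involve $x_i$, we have $f^{p^e} = \sum_m c_m^{p^e} x_i^{m p^e}$. Applying $D_{p^e,i}$ term by term via $D_{p^e, i}(x_i^N) = \binom{N}{p^e} x_i^{N - p^e}$ yields
\[
D_{p^e, i}(f^{p^e}) = \sum_{m \geq 1} \binom{m p^e}{p^e} c_m^{p^e} x_i^{(m-1)p^e}.
\]

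The key arithmetic input is the congruence $\binom{m p^e}{p^e} \equiv m \pmod p$, which is a routine application of Lucas' theorem: the base-$p$ expansion of $p^e$ has a single $1$ in position $e$, so Lucas picks out only the digit of $m p^e$ in that position, which is $m \bmod p$. Combining this with $m^{p^e} \equiv m \pmod p$, the sum rewrites as
\[
\sum_{m \geq 1} m \, c_m^{p^e} x_i^{(m-1) p^e}
= \sum_{m \geq 1} (m c_m)^{p^e} (x_i^{m-1})^{p^e}
= \Bigl(\sum_{m \geq 1} m c_m x_i^{m-1}\Bigr)^{p^e}
= \Bigl(\frac{\partial f}{\partial x_i}\Bigr)^{p^e},
\]
where the last step uses additivity of Frobenius once more. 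There is no real obstacle here; the only moving part is the binomial congruence, and the argument works uniformly in both the polynomial and power series settings.
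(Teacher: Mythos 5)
Your proof is correct and follows essentially the same route as the paper: reduce to $x_i$-monomials using linearity over the remaining variables together with Frobenius additivity, then apply Lucas' theorem to get $\binom{mp^e}{p^e}\equiv m \pmod p$ and Fermat's little theorem to match $m^{p^e}$. The only cosmetic difference is that the paper first reduces all the way to $f = x_i^t$ before computing, whereas you carry the coefficients $c_m$ through the calculation; note also that your opening appeal to mere linearity is slightly imprecise (the map $f\mapsto D_{p^e,i}(f^{p^e})$ is additive only because of Frobenius, not $k$-linear), but you invoke Frobenius correctly where it matters, so this is purely a wording issue.
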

\begin{proof}[Proof]
Since $D_{p^e,i}$ is $k$-linear and $f$ is a $k$-linear combination of monomials, it suffices to consider the case when $f$ is a monomial. 
Also since $D_{p^e,i}$ is $k[x_1,\dots,\hat{x}_i,\dots,x_n]$-linear (or $k[[x_1,\dots,\hat{x}_i,\dots,x_n]]$-linear), 
it suffices to consider that case when $f=x^t_i$ for some $t\in\mathbb{N}$. 
We have
\[D_{p^e,i}(f^{p^e})=\binom{tp^e}{p^e}x^{(t-1)p^e},\ {\rm and\ }(\frac{\partial f}{\partial x_i})^{p^e}=t^{p^e}x^{(t-1)p^e}.\]
But it is well-known that $\binom{tp^e}{p^e}\equiv t\ ({\rm mod}\ p)$ by Lucas' Theorem and $t^{p^e}\equiv t\ ({\rm mod}\ p)$ by Fermat's Little Theorem. 
\end{proof}

We can now prove our main theorem.
\begin{proof}[Proof of Main Theorem]
It is clear that it is enough to prove that
\[\Jac(f)\subseteq \tau((f)^{1-\frac{1}{p^e}})\]
for all integers $e>0$. Since $ \tau((f)^{1-\frac{1}{p^e}})=I_e(f^{p^e-1})$, the smallest ideal $J$ such that $f^{p^e-1}\subseteq J^{[p^e]}$, it suffices to prove that, for any ideal $J$ with $f^{p^e-1}\subseteq J^{[p^e]}$, we have
\[\frac{\partial f}{\partial x_i}\in J.\]
Since $D_{t,i}$ with $t<p^e$ is $R^{p^e}$-linear, we have
\[D_{t,i}(f^{p^e-1})\in J^{[p^e]},\ {\rm for\ all}\ t<p^e.\]
Setting $m=p^e$ in the identity (\ref{key-formula}), we have that
\[fD_{p^e,i}(f^{p^e-1})=\sum^{p^e}_{\ell=2}(\ell-1)D_{\ell,i}(f)D_{p^e-\ell,i}(f^{p^e-1})\in J^{[p^e]}.\]
According to Proposition \ref{higher-derivatives},
\[D_{p^e,i}(f^{p^e})=D_{p^e,i}(ff^{p^e-1})=\sum^{p^e}_{\ell=0}D_{\ell,i}(f)D_{p^e-\ell,i}(f^{p^e-1}),\]
and hence $D_{p^e,i}(f^{p^e})\in J^{[p^e]}$. Combining this with Lemma \ref{p^e-derivative}, we see that
\[(\frac{\partial f}{\partial x_i})^{p^e}\in J^{[p^e]}\]
and consequently
\[\frac{\partial f}{\partial x_i}\in J\]
since $R$ is regular. This finishes the proof of our main theorem.
\end{proof}

\subsection*{Acknowledgments.} The results in this paper were obtained while the first and third authors enjoyed the hospitality of the School of Mathematics at the University of Minnesota. This work was initiated at the Commutative Algebra MRC held in June 2010; the third author wishes to thank the organizers for providing a simulating atmosphere.

\end{document}